\newtheorem{definition}{Definition}
\newtheorem{thm}{Theorem}
\newtheorem{remark}{Remark}
\newtheorem{corollary}{Corollary}
\numberwithin{equation}{section}
\title{FRACTIONAL SPHERICAL RANDOM FIELDS}
\author{
Mirko D'Ovidio\footnote{Department of Basic and Applied Sciences for Engineering, Sapienza
University of Rome, Via A. Scarpa, 16, 00161, Roma, Italy}, 
Nikolai Leonenko\footnote{School of Mathematics, Cardiff University, Senghennydd Road, Cardiff CF24 4YH, UK},
Enzo Orsingher\footnote{Department of Statistical Sciences, Sapienza University of Rome, P.le Aldo Moro, 5, 00185, Rome, Italy}
}
\begin{document}

\maketitle

\begin{abstract}
 In this paper we study the solutions of different forms of
fractional equations on the unit sphere $\mathbb{S}_{1}^{2}$ $\subset 
\mathbb{R}^{3}$ possessing the structure of time-dependent random fields. We
study the correlation functions of the random fields emerging in the
analysis of the solutions of the fractional equations and examine their
long-range behaviour.
\end{abstract}

{\bf Keywords:}  fractional equations, spherical Brownian motion, subordinators, random fields, Laplace-Beltrami operators, spherical harmonics.

{\bf AMS MSC 2010:} 60G60; 60G22; 60H99

\section{Introduction}

In this paper we deal with various forms of random fields on the unit sphere 
$\mathbf{S}_{1}^{2}$ indexed by the spherical Brownian motion. We restrict
ourselves to isotropic random fields for which the expansion in terms of
spherical harmonics holds (see \cite{DomPec-book} and the references
therein). The explicit law of the Brownian motion on $\mathbf{S}_{1}^{2}$
was first obtained in \cite{Yios1949}. For Brownian motion on $\mathbf{S}%
_{1}^{d}$, see \cite[pag. 338]{KarlinTaylor}. Time-dependent random fields
on the line or on arbitrary Euclidean spaces have been studied by several
authors (see, for example, \cite{KLRM,Angulo08,LRMT} and the
references therein). We here study time-dependent random fields on the
sphere $\mathbf{S}_{1}^{2}$, governed by different stochastic differential
equations.

We first study random fields emerging from the Cauchy problem 
\begin{equation}
\left\{ 
\begin{array}{ll}
\displaystyle\left( \gamma -\mathbb{D}_{M}+\frac{\partial ^{\beta }}{%
\partial t^{\beta }}\right) X_{t}(x)=0, & x\in \mathbf{S}_{1}^{2},\;t>0,\;0<%
\beta <1,\;\gamma >0 \\ 
\displaystyle X_{0}(x)=T(x), & 
\end{array}%
\right.   \label{eq1-intro}
\end{equation}%
where $\mathbb{D}_{M}$ is a suitable differential operator defined below, $%
\frac{\partial ^{\beta }}{\partial t^{\beta }}$ is the Dzerbayshan-Caputo
fractional derivative. By $T(x)$, $x\in \mathbf{S}_{1}^{2}$ we denote an
isotropic Gaussian field on the unit sphere. We are able to obtain the
solution $X_{t}(x)$ of \eqref{eq1-intro} and to show that its covariance
function displays a long-memory behaviour.

We then consider the non-homogeneous fractional equation 
\begin{equation}
\left( \gamma -\mathbb{D}_{M}\right) ^{\beta }X(x)=T(x),\quad x\in \mathbf{S}%
_{1}^{2},\;0<\beta <1
\end{equation}%
of which 
\begin{equation}
\left( \gamma -\mathbb{D}_{M}-\varphi \frac{\partial }{\partial t}\right)
^{\beta }X_{t}(x)=T_{t}(x),\quad x\in \mathbf{S}_{1}^{2},\;t>0,\;0<\beta
<1,\;\gamma >0,\;\varphi \geq 0 \label{eq2-intro}
\end{equation}%
is the time-dependent extension. We obtain a solution to \eqref{eq2-intro}
which is a random field on the sphere with covariance function with a
short-range dependence.

The couple $(B_{s}(t),T(x+B_{s}(t)))$ describes a random motion on the
unit-radius sphere with dynamics governed by fractional stochastic equations %
\eqref{eq1-intro} and \eqref{eq2-intro}.

Random fields similar to those examined here are considered in the analysis
of the cosmic microwave background radiation (CMB radiation). In this case,
the correlation structure turns out to be very important as well as the
angular power spectrum. The angular power spectrum plays a key role in the
study of the corresponding random field. In particular, the high-frequency
behaviour of the angular power spectrum is related to some anisotropies of
the CMB radiation (see for example \cite{DovCoordinates, DomPec-book}). Such
relations have been also investigated in \cite{DovNane} where a coordinates
change driven by a fractional equation has been considered.

Diffusions on the sphere arise in several contexts. At the
cellular level, diffusion is an important mode of transport of substances.
The cell wall is a lipid membrane and biological substances like lipids and
proteins diffuse on it. In general biological membranes are curved surfaces.
Spherical diffusions also crop up in the swimming of bacteria, surface
smoothening in computer graphics [\cite{BU}] and global migration patterns
of marine mammals \cite{BR}. 

\section{Preliminaries}

\subsection{Isotropic random fields on the unit-radius sphere}

We consider the square integrable $2$-weakly isotropic Gaussian random field 
\begin{equation}
\{T(x);\,x\in \mathbf{S}_{1}^{2}\}
\end{equation}%
on the sphere $\mathbf{S}_{1}^{2}=\{x\in \mathbf{R}^{3}\,:\,|x|=1\}$ for
which 
\begin{eqnarray*}
\mathbb{E}T(gx) &=&0, \\
\mathbb{E}T^{2}(gx) &=&\mathbb{E}T^{2}(x) \\
\mathbb{E}[T(gx_{1})\,T(gx_{2})] &=&\mathbb{E}[T(x_{1})\,T(x_{2})].
\end{eqnarray*}%
for all $g\in SO(3)$ where $SO(3)$ is the special group of rotations in $%
\mathbf{R}^{3}$. We will consider the spectral representation 
\begin{equation}
T(x)=\sum_{l=0}^{\infty }\sum_{m=-l}^{+l}a_{l,m}\mathcal{Y}%
_{l,m}(x)=\sum_{l=0}^{\infty }T_{l}(x)  \label{Trep}
\end{equation}%
where 
\begin{equation}
a_{l,m}=\int_{\mathbf{S}^{2}}T(x)\mathcal{Y}_{l,m}^{\ast }(x)\lambda
(dx),\quad -l\leq m\leq +l,\;l\geq 0  \label{alm-coeff-intro}
\end{equation}%
are the Fourier random coefficients of $T$. The convergence in \eqref{Trep}
must be meant in the sense that 
\begin{equation}
\lim_{L\rightarrow \infty }\mathbb{E}\left[ \int_{\mathbf{S}_{1}^{2}}\left(
T(x)-\sum_{l=0}^{L}\sum_{m=-l}^{+l}a_{l,m}\,\mathcal{Y}_{l,m}(x)\right)
^{2}\lambda (dx)\right] =0  \label{covergence-series-one}
\end{equation}%
where $\lambda (dx)$ is the Lebesgue measure on the sphere $\mathbf{S}%
_{1}^{2}$, $\{\mathcal{Y}_{l,m}(x):\,l\geq 0,\;m=-l,\ldots ,+l,\;x\in 
\mathbf{S}_{1}^{2}\}$ is the set of spherical harmonics representing an
orthonormal basis for the space $L^{2}(\mathbf{S}_{1}^{2},\lambda (dx))$. By 
$\mathcal{Y}_{l,m}^{\ast }(x)$ we denote the conjugate of $\mathcal{Y}%
_{l,m}(x)$. For the sake of clarity we observe that for all $x\in \mathbf{S}%
_{1}^{2}$ and $0\leq \vartheta \leq \pi ,\;0\leq \varphi \leq 2\pi $: 
\begin{equation*}
\lambda (dx)=\lambda (d\vartheta ,d\varphi )=d\varphi \,d\vartheta \,\sin
\vartheta 
\end{equation*}%
and 
\begin{equation*}
x=(\sin \vartheta \cos \varphi ,\sin \vartheta \sin \varphi ,\cos \vartheta
).
\end{equation*}%
We shall write $f(x)$ instead of $f(\vartheta ,\varphi )$ when no confusion
arises.

The random coefficients \eqref{alm-coeff-intro} are zero-mean Gaussian
complex random variables such that (\cite{Baldi-Marinucci-2007}) 
\begin{equation}
\mathbb{E}[a_{l,m}\;a^*_{l^\prime, m^\prime}] =
\delta_l^{l^\prime}\delta_m^{m^\prime} \mathbb{E} | a_{l,m} |^2
\label{angular-power-C}
\end{equation}
where 
\begin{equation}
\mathbb{E}| a_{l,m} |^2 = C_l, \quad l \geq 0
\end{equation}
is the angular power spectrum of the random field $T$ which under the
assumption of Gaussianity fully characterizes the dependence structure of $T$%
. Clearly, $\delta_a^b$ is the Kronecker symbol.

For a fixed integer $l$ we define $\mu _{l}=l(l+1)$. The spherical harmonics 
$Y_{l,m}(\vartheta ,\varphi )$ are defined as 
\begin{equation*}
\mathcal{Y}_{l,m}(\vartheta ,\varphi )=\sqrt{\frac{2l+1}{4\pi }\frac{(l-m)!}{%
(l+m)!}}Q_{l,m}(\cos \vartheta )e^{im\varphi },\quad 0\leq \vartheta \leq
\pi ,\;0\leq \varphi \leq 2\pi 
\end{equation*}%
where 
\begin{equation*}
Q_{l,m}(z)=(-1)^{m}(1-z^{2})^{\frac{m}{2}}\frac{d^{m}}{dz^{m}}Q_{l}(z),\quad
|z|<1
\end{equation*}%
are the associated Legendre functions and $Q_{l}$ are the the Legendre
polynomials with Rodrigues representation 
\begin{equation*}
Q_{l}(z)=\frac{1}{2^{l}l!}\frac{d^{l}}{dz^{l}}(z^{2}-1)^{l},\quad |z|<1.
\end{equation*}%
We remind that the spherical harmonics solve 
\begin{equation}
\Delta _{\mathbf{S}_{1}^{2}}\mathcal{Y}_{l,m}=-\mu _{l}\,\mathcal{Y}%
_{l,m},\quad l\geq 0,\;|m|\leq l  \label{eigenY}
\end{equation}%
where 
\begin{equation*}
\Delta _{\mathbf{S}_{1}^{2}}=\frac{1}{\sin ^{2}\vartheta }\frac{\partial ^{2}%
}{\partial \varphi ^{2}}+\frac{1}{\sin \vartheta }\frac{\partial }{\partial
\vartheta }\left( \sin \vartheta \frac{\partial }{\partial \vartheta }%
\right) 
\end{equation*}%
is the spherical Laplace operator or Laplace-Beltrami operator.

In view of \eqref{angular-power-C}, the covariance function of $T(x)$ writes 
\begin{equation}
\mathbb{E}[T(x)T(y)] = \sum_{lm} C_l \mathcal{Y}_{l,m}(x)\mathcal{Y}%
_{l,m}^*(y) = \sum_{l} C_l \frac{2l+1}{4\pi}Q_l(\langle x, y \rangle)
\end{equation}
where in the last step we used the addition formula for spherical harmonics 
\begin{equation}
\sum_{m=-l}^{+l} \mathcal{Y}_{l,m}(y)\mathcal{Y}^*_{l,m}(x) = \frac{2l+1}{%
4\pi} Q_l(\langle x, y \rangle).  \label{addition-f}
\end{equation}
and the inner product 
\begin{equation*}
\langle x,y \rangle = \cos d(x,y)= \cos \vartheta_x \cos
\vartheta_y+\sin\vartheta_x \sin\vartheta_y\cos (\varphi_x-\varphi_y)
\end{equation*}
where $d(\cdot, \cdot)$ is the spherical distance between the points $x,y$.

For the details on this material we refer to the book by Marinucci and
Peccati \cite{DomPec-book}.

\subsection{Subordinators and fractional operators}

Let  $F(t)$, $t\geq 0$ be a L\'{e}vy subordinator with characteristic
function 
\begin{equation}
\mathbb{E}e^{i\xi F(t)}=e^{-t\Phi (\xi )}=e^{-t\left( ib\xi
+\int_{0}^{\infty }\left( e^{i\xi y}-1\right) M(dy)\right) },
\end{equation}%
where $b\geq 0$ is the drift and $M(\cdot )$ is the L\'{e}vy measure $M$ on $%
\mathbf{R}_{+}\setminus \{0\}$ satisfying the condition:%
\begin{equation*}
\int_{0}^{\infty }(y\wedge 1)M(dy)<\infty .
\end{equation*}
$\int_{0}^{\infty }(y\wedge 1)M(dy)<\infty $ and $M(-\infty ,0)=0$. The
Laplace transform of the law of a subordinator $F(t)$, $t>0$ defined above,
can be written as 
\begin{equation}
\mathbb{E}e^{-\xi F(t)}=e^{-t\Psi (\xi )}=e^{t\Phi (i\xi )}=e^{-t\left( b\xi
+\int_{0}^{\infty }\left( 1-e^{-\xi y}\right) M(dy)\right) },
\label{lap-exp-sub}
\end{equation}%
where $\Psi (\xi )$ is known as Laplace exponent. If $F(t)$, $t\geq 0$, is
the $\beta$-stable subordinator, then $\Psi (\xi )=\xi ^{\beta }$, $\beta
\in (0,1)$. Hereafter, we assume $b=0$.

We write the transition density of a Brownian motion on the unit sphere (see 
\cite{Yios1949}) as follows 
\begin{eqnarray}
Pr\{x+B_{t}\in dy\}/dy &=&Pr\{B_{t}\in dy\,|\,B_{0}=x\}/dy  \notag \\
&=&\sum_{l=0}^{\infty }\sum_{m=-l}^{+l}e^{-t\mu _{l}}\mathcal{Y}_{l,m}(y)%
\mathcal{Y}_{l,m}^{\ast }(x)  \notag \\
&=&\sum_{l}e^{-t\mu _{l}}\frac{2l+1}{4\pi }Q_{l}(\langle x,y\rangle )
\label{PdiB}
\end{eqnarray}%
where we used the addition formula for spherical harmonics \eqref{addition-f}%
. Furthermore, we shall write 
\begin{equation}
P_{t}f(x)=\mathbb{E}f(x+B_{t})=\int_{\mathbf{S}_{1}^{2}}f(y)Pr\{x+B_{t}\in
dy\}  \label{semigB}
\end{equation}%
where $P_{t}f(x)$ is the solution to the initial-value problem 
\begin{equation}
\left\{ 
\begin{array}{ll}
\displaystyle\frac{\partial u}{\partial t}=\Delta _{\mathbf{S}_{1}^{2}}u, & 
x\in \mathbf{S}_{1}^{2},\,t>0 \\ 
\displaystyle u(x,0)=f(x) & 
\end{array}%
\right. 
\end{equation}%
for a measurable function $f(x),x\in \mathbf{S}_{1}^{2}.$

Let $f$ be a square integrable function on the unit sphere, that is $f \in
L^2(\mathbf{S}^2_1)$. We define the following operator 
\begin{equation}
\mathbb{D}_{M} f(x) = \int_0^\infty \left( P_s\, f(x) - f(x) \right) M(ds)
\label{frac-oper-sphere}
\end{equation}
where, from \eqref{PdiB} and \eqref{semigB}, we have that 
\begin{equation}
P_s f(x) = \sum_{l=0}^\infty \sum_{m=-l}^{+l} e^{-s\mu_l} \mathcal{Y}%
_{l,m}(x) f_{l,m}  \label{explicit-semigB}
\end{equation}
and $f_{l,m}$ are the Fourier coefficients of $f$. The operator %
\eqref{frac-oper-sphere} can be rewritten as 
\begin{equation}
\mathbb{D}_M\, f(x) = \int_{\mathbf{S}^2_1}\left( f(y) - f(x) \right) 
\widehat{J}(x,y)\lambda(dy)  \label{D-op-rewritten}
\end{equation}
where $\lambda$ is the Lebesgue measure on $\mathbf{S}^2_1$ and 
\begin{equation*}
\widehat{J}(x,y) = \sum_{l=0}^\infty \frac{2l+1}{4\pi} Q_l(\langle y, x
\rangle) \widehat{\Psi}(\mu_l)
\end{equation*}
with $\widehat{\Psi}(\mu) = \int_0^\infty e^{-s \mu} M(ds)$ when the
integral exists. Indeed we can write 
\begin{align*}
\mathbb{D}_M\, f(x) = & \int_0^\infty \left( P_sf(x) - f(x) \right) M(ds) \\
= & \int_0^\infty \mathbb{E}\left[\left( f(x+B_s) - f(x) \right) \right]
M(ds) \\
= & \int_0^\infty \int_{\mathbf{S}^2_1}\left( f(y) - f(x) \right) Pr\{ x+
B_s \in dy \} M(ds) \\
= & \int_{\mathbf{S}^2_1}\left( f(y) - f(x) \right) \widehat{J}%
(x,y)\lambda(dy)
\end{align*}
where 
\begin{align*}
\widehat{J}(x,y) \lambda(dy)= & \int_0^\infty Pr\{ x+B_s \in dy \} M(ds) \\
= & \lambda(dy) \sum_{l} \frac{2l+1}{4\pi} Q_l(\langle y, x \rangle)
\int_0^\infty e^{-s \mu_l} M(ds) \\
= & \lambda(dy) \sum_{l} \frac{2l+1}{4\pi} Q_l(\langle y, x \rangle) 
\widehat{\Psi}(\mu_l).
\end{align*}
Furthermore, from \eqref{explicit-semigB}, the operator %
\eqref{frac-oper-sphere} can be written as follows 
\begin{align*}
\mathbb{D}_M f(x) = & \int_0^\infty \left( P_s f(x) - P_0f(x) \right) M(ds)
\\
= & \sum_{lm} f_{l,m} \mathcal{Y}_{l,m}(x) \int_0^\infty \left( e^{-s \mu_l}
- 1 \right)M(ds) \\
= & \left[ by \, (\ref{lap-exp-sub}) \right] = -\sum_{lm} f_{l,m} \mathcal{Y}%
_{l,m}(x) \Psi(\mu_l) \\
= & -\sum_{lm} \left( \int_{\mathbf{S}^2_1} f(y) Y^*_{l,m}(y) \lambda(dy)
\right) \mathcal{Y}_{l,m}(x) \Psi(\mu_l) \\
= & -\int_{\mathbf{S}^2_1} f(y) \left( \sum_{lm} \Psi(\mu_l) \mathcal{Y}%
_{l,m}(x) \mathcal{Y}_{l,m}^*(y) \right) \lambda(dy) \\
= & -\int_{\mathbf{S}^2_1} f(y) J(x,y) \lambda(dy)
\end{align*}
where 
\begin{equation}
J(x,y) = \sum_{l=0}^\infty \sum_{m=-l}^{+l} \Psi(\mu_l) \mathcal{Y}_{l,m}(x) 
\mathcal{Y}_{l,m}^*(y) = \sum_{l=0}^\infty \Psi(\mu_l) \frac{2l+1}{4\pi}
Q_l(\langle x, y \rangle)
\end{equation}
exists (in the last step we have applied the addition formula %
\eqref{addition-f}).

We introduce the Sobolev space 
\begin{equation}
H^{s}(\mathbf{S}^2_1) = \left\lbrace f \in L^2(\mathbf{S}^2_1):\,
\sum_{l=0}^{\infty} (2l+1)^{2s} f_l < \infty \right\rbrace  \label{SobolevS}
\end{equation}
where 
\begin{equation*}
f_l= \sum_{|m| \leq l} \big| f_{l,m} \big|^2 = \sum_{|m| \leq l} \Bigg| %
\int_{\mathbf{S}^2_1} f(x) \mathcal{Y}^*_{l,m}(x)\lambda(dx) \Bigg|^2, \quad
l=0,1,2, \ldots .
\end{equation*}

\begin{definition}
Let $\Psi$ be the symbol of a subordinator. Let $f \in H^s(\mathbf{S}^2_1)$
and $s>4$. Then, 
\begin{equation}
\mathbb{D}_M f(x) = -\sum_{l=0}^\infty \sum_{m=-l}^{+l} f_{l,m} \mathcal{Y}%
_{l,m}(x) \Psi(\mu_l)  \label{def-der-psi}
\end{equation}
where 
\begin{equation*}
f_{l,m} = \int_{\mathbb{S}^2_1} f(x) \mathcal{Y}^*_{l,m}(x) \lambda(dx)
\end{equation*}
are the Fourier coefficients of the initial condition.
\end{definition}

The series \eqref{def-der-psi} converges absolutely and uniformly. Indeed, $%
f_l < l^{-2s}$ with $s> 4$ (being $f \in H^s(\mathbf{S}^2_1)$), $\|
Y_{l,m}\|_{\infty} < l^{1/2}$ (see \cite{Quantum}) and $\Psi(\mu_l) \leq l^2$
and thus, by considering that 
\begin{equation*}
\sum_{m} |f_{l,m}| \leq \left( \sum_{m} |f_{l,m}|^2 \right)^\frac{1}{2}
\left( (2l+1) \right)^\frac{1}{2} = \sqrt{(2l+1) \, f_l} \leq l^{-s + \frac{1%
}{2}}
\end{equation*}
we get the claim.

\begin{definition}
$\mathbb{P}_t = \exp(t \mathbb{D}_M)$ is the semigroup associated with %
\eqref{D-op-rewritten} with symbol $\widehat{\mathbb{P}_t} = \exp(-t \Psi)$
where $-\Psi$ is the Fourier multiplier of $\mathbb{D}_M$.
\end{definition}

\section{Some fractional equations on the sphere}

We recall the Dzerbayshan-Caputo fractional derivative 
\begin{equation}
\frac{\partial ^{\beta }u}{\partial t^{\beta }}(x,t)=\frac{1}{\Gamma
(1-\beta )}\int_{0}^{t}\frac{\partial u(x,s)}{\partial s}\frac{ds}{%
(t-s)^{\beta }}
\end{equation}%
for $0<\beta <1$, $x\in \mathbb{R}$, $t>0$, see, e.g., \cite{MS}, p. 38.

The inverse $\mathfrak{L}_{t}^{\beta }$ of a $\beta -$stable subordinator $%
\mathfrak{H}_{t}^{\beta }$ can be defined by the following relationship 
\begin{equation*}
Pr\{\mathfrak{L}_{t}^{\beta }<x\}=Pr\{\mathfrak{H}_{x}^{\beta }>t\}
\end{equation*}%
for $x,t>0$, see, e.g., \cite{MS}, p. 101.

The Mittag-Leffler function is defined as 
\begin{equation}
E_{\beta }(x)=\sum_{k=0}^{\infty }\frac{x^{k}}{\Gamma (\beta k+1)},\quad
x\in \mathbb{R},\;\beta >0,
\end{equation}%
see, e.g.,  \cite{MS}, p. 35.

We assume also that the random field $T$ introduced in \eqref{Trep} is
Gaussian and its Fourier coefficients $a_{l,m}$ are independent complex
zero-mean Gaussian r.v.'s. We shall use the following notation 
\begin{equation*}
\sum_{l=0}^{\infty }\sum_{m=-l}^{+l}=\sum_{lm}
\end{equation*}%
when no confusion arises.

We pass now to the first theorem. Denote by $F^\Psi(\mathfrak{L}^\beta_t)$
the subordinator with symbol $\Psi$ time-changed by the inverse of a stable
subordinator of order $\beta \in (0,1)$.

\begin{thm}
Let us consider $\gamma \geq 0$ and $\beta \in (0,1)$. The solution to the
fractional equation 
\begin{equation}
\left( \gamma -\mathbb{D}_{M}+\frac{\partial ^{\beta }}{\partial t^{\beta }}%
\right) \,X_{t}(x)=0,\quad x\in \mathbf{S}_{1}^{2},\;t\geq 0
\label{rand-pde}
\end{equation}%
with initial condition $X_{0}(x)=T(x)$ is a time-dependent random field on
the sphere $\mathbf{S}_{1}^{2}$ written as 
\begin{equation}
X_{t}(x)=\sum_{l=0}^{\infty }\sum_{m=-l}^{+l}a_{l,m}E_{\beta }\left(
-t^{\beta }(\gamma +\Psi (\mu _{l}))\right) \mathcal{Y}_{l,m}(x)
\label{X-sol-gammaPDE}
\end{equation}%
where 
\begin{equation}
a_{l,m}=\int_{\mathbf{S}_{1}^{2}}X_{0}(x)\mathcal{Y}_{l,m}^{\ast }(x)\lambda
(dx).
\end{equation}%
Furthermore, the following representation holds 
\begin{equation}
X_{t}(x)=\mathbb{E}\left[ T(x+B(\gamma \mathfrak{L}_{t}^{\beta }+F^{\Psi }(%
\mathfrak{L}_{t}^{\beta })))\big|\mathfrak{F}_{T}\right] 
\label{X-rep-F-algebra}
\end{equation}%
where $\mathfrak{F}_{T}$ is the $\sigma $-field generated by $X_{0}=T$. %
\label{thm:one}
\end{thm}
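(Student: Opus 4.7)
The plan is to exploit the spectral decomposition of $\mathbb{D}_M$ on spherical harmonics so that the PDE \eqref{rand-pde} decouples into a family of scalar fractional ODEs, one per Fourier mode. Writing $X_t(x) = \sum_{l,m} X_{l,m}(t) \mathcal{Y}_{l,m}(x)$ and using the eigenrelation $\mathbb{D}_M \mathcal{Y}_{l,m} = -\Psi(\mu_l) \mathcal{Y}_{l,m}$ recorded in Definition~2, the equation reduces to
$$\frac{d^\beta}{dt^\beta} X_{l,m}(t) = -(\gamma + \Psi(\mu_l))\, X_{l,m}(t), \qquad X_{l,m}(0) = a_{l,m}.$$

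Second, I would invoke the standard Caputo Cauchy-problem solution $u(t) = u_0\, E_\beta(-\lambda t^\beta)$ for $\partial_t^\beta u = -\lambda u$, which yields $X_{l,m}(t) = a_{l,m}\, E_\beta(-t^\beta(\gamma + \Psi(\mu_l)))$ and hence the stated series \eqref{X-sol-gammaPDE}. Convergence of the series in $L^2(\Omega \times \mathbf{S}_1^2)$, as well as of the series for $\partial_t^\beta X_t$ and $\mathbb{D}_M X_t$ (so that termwise application of the operators is legitimate), would follow from the bound $|E_\beta(-t^\beta(\gamma + \Psi(\mu_l)))| \leq 1$ for $t \geq 0$, combined with the Sobolev-type decay of the random Fourier coefficients $a_{l,m}$ built into \eqref{SobolevS} and the spherical-harmonic bounds already cited in the text.

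For the probabilistic representation, the key identity is the Laplace transform of the inverse $\beta$-stable subordinator, $E_\beta(-t^\beta z) = \mathbb{E}[e^{-z \mathfrak{L}_t^\beta}]$. Applied to $z = \gamma + \Psi(\mu_l)$, this factors as $\mathbb{E}[e^{-\gamma \mathfrak{L}_t^\beta}\, e^{-\Psi(\mu_l)\mathfrak{L}_t^\beta}]$; using the subordinator's Laplace exponent to write $e^{-\Psi(\mu_l)s} = \mathbb{E}[e^{-\mu_l F^\Psi(s)}\mid F^\Psi]$ with $s = \mathfrak{L}_t^\beta$, together with the sphere-heat eigenrelation $\mathbb{E}[\mathcal{Y}_{l,m}(x + B_r)] = e^{-\mu_l r}\mathcal{Y}_{l,m}(x)$ from \eqref{PdiB}, each Mittag-Leffler coefficient is recognized as the conditional expectation of $\mathcal{Y}_{l,m}$ along the time-changed spherical Brownian trajectory. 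Summing against the $\mathfrak{F}_T$-measurable amplitudes $a_{l,m}$ (pulled outside the inner expectation by conditional Fubini) reassembles \eqref{X-rep-F-algebra} from the spectral expansion of $T$.

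The main obstacle will be justifying the termwise manipulations rigorously on both sides: first, swapping $\partial_t^\beta$ and $\mathbb{D}_M$ with the infinite spherical-harmonic sum, which requires uniform convergence on compact time intervals via the Mittag-Leffler bound and the Sobolev decay of $\{a_{l,m}\}$; and second, commuting the $(B, F^\Psi, \mathfrak{L}^\beta)$-expectation with $\sum_{l,m}$, which follows from independence of these processes from $\mathfrak{F}_T$ and finiteness of second moments. Uniqueness of the solution in the spectral class is then immediate, since any two solutions must have Fourier coefficients satisfying the same scalar Caputo ODE and hence must coincide.
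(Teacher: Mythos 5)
Your derivation of the series \eqref{X-sol-gammaPDE} is essentially the paper's own argument: the paper likewise establishes $\mathbb{D}_M\mathcal{Y}_{l,m}=-\Psi(\mu_l)\mathcal{Y}_{l,m}$ from $P_s\mathcal{Y}_{l,m}=e^{-s\mu_l}\mathcal{Y}_{l,m}$ and then invokes the eigenfunction property \eqref{eigen-E} of the Dzerbayshan--Caputo derivative to check the equation term by term; whether one phrases this as ``decouple into scalar Caputo ODEs and solve'' or ``substitute the ansatz and verify'' is immaterial, and your attention to the interchange of $\partial_t^\beta$, $\mathbb{D}_M$ and the infinite sum is if anything more careful than the paper's.

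The part that deserves scrutiny is the representation \eqref{X-rep-F-algebra}, which the paper in fact never proves (its proof ends after the termwise verification). Your subordination argument is the right general idea, but as written it does not deliver \eqref{X-rep-F-algebra}. You factor $E_\beta\left(-t^\beta(\gamma+\Psi(\mu_l))\right)=\mathbb{E}\left[e^{-\gamma\mathfrak{L}_t^\beta}\,e^{-\Psi(\mu_l)\mathfrak{L}_t^\beta}\right]$; the first factor is a \emph{killing} (discount) term, not a time change of $B$. Absorbing it into the Brownian clock as \eqref{X-rep-F-algebra} does --- running $B$ for the extra time $\gamma\mathfrak{L}_t^\beta$ --- contributes $e^{-\gamma\mu_l\mathfrak{L}_t^\beta}$ rather than $e^{-\gamma\mathfrak{L}_t^\beta}$, so the right-hand side of \eqref{X-rep-F-algebra} actually equals $\sum_{lm}a_{l,m}E_\beta\left(-t^\beta(\gamma\mu_l+\Psi(\mu_l))\right)\mathcal{Y}_{l,m}(x)$, which differs from \eqref{X-sol-gammaPDE} unless $\gamma=0$. (The paper's own remark for $\beta=1$, $\Psi(\xi)=\xi$ yields $e^{-t(\gamma+1)\mu_l}$, confirming that \eqref{X-rep-F-algebra} encodes $\gamma\mu_l$, not $\gamma$.) The representation consistent with \eqref{X-sol-gammaPDE} is $X_t(x)=\mathbb{E}\left[e^{-\gamma\mathfrak{L}_t^\beta}\,T(x+B(F^{\Psi}(\mathfrak{L}_t^\beta)))\,\big|\,\mathfrak{F}_T\right]$. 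So your computation is sound up to the last step, but the claim that it ``reassembles \eqref{X-rep-F-algebra}'' is where it breaks; what you have really done is expose an inconsistency between the two displays of the theorem rather than prove their equivalence. A minor further point: the $L^2(dP\times d\lambda)$ convergence needs only $\sum_l(2l+1)C_l<\infty$ together with $|E_\beta(-x)|\leq 1$, whereas the termwise application of $\mathbb{D}_M$ requires the stronger Sobolev-type decay you cite from \eqref{SobolevS}; you should state explicitly which regularity of $T$ you are assuming, though the paper is equally silent on this.
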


\begin{proof}
First we notice that
\begin{equation}
- \frac{\partial}{\partial t}\, \mathbb{E} e^{-\xi (\gamma t + F_t)} \Big|%
_{t=0} = \xi \gamma + \Psi(\xi)
\end{equation}
which coincides with \eqref{lap-exp-sub} for $\gamma=b$. Indeed, we are
dealing with the symbol $\Psi$ of the subordinator $F_t$ without drift. Furthermore, it is well-known that the Mittag-Leffler function $E_{\beta}$ is an eigenfunction of the Dzerbayshan-Caputo fractional derivative, that is
\begin{equation}
\frac{\partial^\beta}{\partial t^\beta} E_{\beta}(-t^\beta \mu) = -\mu E_{\beta}(-t^\beta \mu) \label{eigen-E}.
\end{equation}

We assume that \eqref{X-sol-gammaPDE} holds true. From the fact that
\begin{align*}
\mathbb{D}_M \, \mathcal{Y}_{l,m}(x) = \int_0^\infty \left( P_s \mathcal{Y}_{l,m}(x) -
\mathcal{Y}_{l,m}(x) \right) M(ds)
\end{align*}
where $\mathcal{Y}_{l,m}(x) = (-1)^m \mathcal{Y}^*_{l,-m}(x)$ and
\begin{equation}
P_s \mathcal{Y}_{l,m}(x) = e^{-s \mu_{l}} \mathcal{Y}_{l,m}(x) \label{PofB}
\end{equation}
we obtain that
\begin{align*}
\mathbb{D}_M \, \mathcal{Y}_{l,m}(x) = & \int_0^\infty \left( e^{-s \mu_l}
\mathcal{Y}_{l,m}(x) - \mathcal{Y}_{l,m}(x) \right) M(ds) \\
= & \int_0^\infty \left( e^{-s \mu_l} - 1 \right) M(ds) \, \mathcal{Y}_{l,m}(x) \\
= & - \Psi(\mu_l )\, \mathcal{Y}_{l,m}(x).
\end{align*}
Formula \eqref{PofB} can be obtained by considering that
\begin{align*}
P_s \mathcal{Y}_{l,m}(x) = & \mathbb{E} \mathcal{Y}_{l,m}(x + B_s)   \\
= & \sum_{l^\prime m^\prime} e^{-s \mu_{l^\prime}} \mathcal{Y}^*_{l^\prime, m^\prime}(x) \int_{\mathbf{S}^2_1} \mathcal{Y}_{l,m}(y)\mathcal{Y}_{l^\prime, m^\prime}\lambda(dy) \\
= & \sum_{l^\prime m^\prime} e^{-s \mu_{l^\prime}} \mathcal{Y}^*_{l^\prime, m^\prime}(x) (-1)^{m^\prime} \int_{\mathbf{S}^2_1} \mathcal{Y}_{l,m}(y)\mathcal{Y}^*_{l^\prime, -m^\prime}\lambda(dy)   \\
= & \sum_{l^\prime m^\prime} e^{-s \mu_{l^\prime}} \mathcal{Y}^*_{l^\prime, m^\prime}(x) (-1)^{m^\prime} \delta_l^{l^\prime}\delta_m^{-m^\prime}= e^{-s \mu_{l}} \mathcal{Y}_{l,m}(x) . 
\end{align*}
Thus, we get that
\begin{align*}
\left( \gamma - \mathbb{D}_M \right) X_t(x) = & \sum_{l=0}^\infty
\sum_{m=-l}^{+l} a_{l,m} \left( \gamma + \Psi(\mu_l) \right) E_{\beta,1} \left( -t^\beta \gamma -
t^\beta \Psi(\mu_l) \right) \mathcal{Y}_{l,m}(x)
\end{align*}
and, from \eqref{eigen-E}, we arrive at
\begin{align*}
& \left( \frac{\partial^\beta}{\partial t^\beta} + \gamma - \mathbb{D}_M
\right)\, X_t(x) = \\
= & \sum_{l=0}^\infty \sum_{m=-l}^{+l} a_{l,m} \left( \frac{\partial^\beta}{%
\partial t^\beta} + \gamma +\Psi(\mu_l) \right) E_{\beta,1} \left(-t^\beta \gamma - t^\beta \Psi(\mu_l) \right)
\mathcal{Y}_{l,m}(x) = 0
\end{align*}
term by term and therefore equation \eqref{rand-pde} is satisfied. This concludes the proof.
\end{proof}

\begin{remark}
Since 
\begin{equation}
T(x) = \sum_{lm} a_{l,m} \mathcal{Y}_{l,m}(x).
\end{equation}
we have that 
\begin{equation}
P_tT(x) = \mathbb{E}[T(x+B_t)| \mathfrak{F}_T] = \sum_{lm} e^{- t \mu_l }
a_{l,m} \mathcal{Y}_{l,m}(x) = T_t(x).  \label{etimedepinn}
\end{equation}
This represents the solution to \eqref{rand-pde} with $\beta=1$, $\gamma=0$
and $\mathbb{D}_M = \Delta_{\mathbb{S}^2_1}$. From \eqref{X-rep-F-algebra},
for $\beta=1$, $\Psi(\xi)=\xi$, that is for the elementary subordinator $%
F(t)=t$ (and $\mathfrak{L}^1_t=t$) we have that 
\begin{align*}
X_t(x) = & \mathbb{E}\left[ T(x+B(\gamma t + t)) \big| \mathfrak{F}_T \right]
= \sum_{lm} a_{l,m} e^{-t(\gamma+ 1) \mu_l} \mathcal{Y}_{l,m}(x).
\end{align*}
\end{remark}

\begin{remark}
The series \eqref{X-sol-gammaPDE} converges in $L^{2}(dP\times d\lambda )$
sense for all $t\geq 0$, that is 
\begin{equation}
\lim_{L\rightarrow \infty }\mathbb{E}\left[ \int_{\mathbf{S}_{1}^{2}}\left(
X_{t}(x)-\sum_{l=0}^{L}\sum_{m=-l}^{+l}a_{l,m}\,\mathcal{T}_{l}(t)\,\mathcal{%
Y}_{l,m}(x)\right) ^{2}\lambda (dx)\right] =0,\quad \forall \,t.
\label{covergence-series}
\end{equation}%
where, in formula \eqref{X-sol-gammaPDE}, the time-dependent random
coefficients are 
\begin{equation*}
\mathcal{T}_{l}(t)=E_{\beta }\left( -t^{\beta }(\gamma +\Psi (\mu
_{l}))\right) ,\quad l\geq 0.
\end{equation*}%
Throughout the paper the convergence \eqref{covergence-series} in mean
square sense on $\mathbb{S}_{1}^{2}$ is considered. \label%
{remark-form-convergence}
\end{remark}


\begin{thm}
Let us consider $\gamma, \varphi \geq 0$ and $\beta \in (0,1]$. A solution
to the fractional equation 
\begin{equation}
\left( \gamma - \mathbb{D}_M - \varphi \frac{\partial}{\partial t}
\right)^{\beta }\, X_t(x) = T_t(x), \quad x \in \mathbf{S}^2_1, \; t\geq 0
\label{rand-pde-varphi}
\end{equation}
where $T_t(x)$ is given in \eqref{etimedepinn}, is a time-dependent random
field on the sphere $\mathbf{S}^2_1$ written as 
\begin{equation}
X_t(x) = \sum_{lm} a_{l,m} e^{-t \mu_l} \left( \gamma + \varphi \mu_l +
\Psi(\mu_l) \right)^{-\beta} \mathcal{Y}_{l,m}(x),
\end{equation}
where $e^{-t \mu_l}a_{l,m}$ are the Fourier coefficients involved in the
representation (\ref{etimedepinn}) of the innovation process $T_t(x)$ in (%
\ref{etimedepinn}) in terms of spherical harmonics. \label{thm:two}
\end{thm}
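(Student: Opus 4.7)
The plan is to verify the candidate $X_t(x)$ by direct substitution, diagonalising the inner operator $A := \gamma - \mathbb{D}_M - \varphi \frac{\partial}{\partial t}$ on the product basis $\phi_{l,m}(x,t) := e^{-t\mu_l}\mathcal{Y}_{l,m}(x)$. I would begin by recording that, from the computation in the proof of Theorem~\ref{thm:one}, $\mathbb{D}_M \mathcal{Y}_{l,m} = -\Psi(\mu_l)\mathcal{Y}_{l,m}$; combining this with $\frac{\partial}{\partial t}e^{-t\mu_l} = -\mu_l e^{-t\mu_l}$, one gets
\begin{equation*}
A\, \phi_{l,m}(x,t) = \bigl( \gamma + \Psi(\mu_l) + \varphi \mu_l \bigr)\phi_{l,m}(x,t),
\end{equation*}
so each $\phi_{l,m}$ is an eigenfunction of $A$ with non-negative eigenvalue $\Lambda_l := \gamma + \Psi(\mu_l) + \varphi \mu_l$.

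Next, I would interpret the fractional power $A^\beta$ via its spectral symbol: a function admitting an expansion in the basis $\{\phi_{l,m}\}$ is mapped by $A^\beta$ to the function with eigenvalues $\Lambda_l^\beta$. Equivalently, this agrees with the Balakrishnan definition
\begin{equation*}
A^\beta f = \frac{\beta}{\Gamma(1-\beta)} \int_0^\infty \bigl( f - e^{-sA}f \bigr) \frac{ds}{s^{\beta+1}},
\end{equation*}
since on each eigenmode the integral reduces to $\Lambda_l^\beta$. Writing the Ansatz
\begin{equation*}
X_t(x) = \sum_{lm} a_{l,m}\, \Lambda_l^{-\beta}\, \phi_{l,m}(x,t),
\end{equation*}
and applying $A^\beta$ term by term cancels the factor $\Lambda_l^{-\beta}$, leaving $\sum_{lm} a_{l,m}\, e^{-t\mu_l}\, \mathcal{Y}_{l,m}(x) = T_t(x)$ as required.

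Finally, I would verify that the series for $X_t(x)$ converges in $L^2(dP\times d\lambda)$, which by Parseval reduces to showing $\sum_{l}(2l+1)\, C_l\, \Lambda_l^{-2\beta}<\infty$; this follows from the bound $\Lambda_l \geq \max\{\gamma, \varphi\mu_l, \Psi(\mu_l)\}$ together with the summability conditions on $C_l$ already used in the paper. The main obstacle is the rigorous definition of $A^\beta$ as a fractional power of the (unbounded) space-time operator $A$: one must justify applying spectral calculus mode-by-mode, which is legitimate because $\gamma - \mathbb{D}_M$ and $-\varphi\partial_t$ commute on the domain spanned by $\{\phi_{l,m}\}$ and $A$ is a non-negative self-adjoint operator there, so the spectral and Balakrishnan definitions coincide on this joint eigenbasis.
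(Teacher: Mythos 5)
Your argument is correct and arrives at the same symbol $\Lambda_l = \gamma + \varphi\mu_l + \Psi(\mu_l)$, but by the opposite route from the paper. The paper \emph{constructs} the solution by applying the negative power directly to the datum: it represents $(\gamma - \mathbb{D}_M - \varphi\frac{\partial}{\partial t})^{-\beta}$ as the Gamma-integral $\int_0^\infty \frac{s^{\beta-1}}{\Gamma(\beta)}\,e^{-sA}\,ds$ of the semigroup, interprets $e^{s\varphi\partial_t}$ as the time shift $T_t \mapsto T_{t+\varphi s}$ and $e^{s\mathbb{D}_M}$ as $\mathbb{P}_s$, and evaluates the resulting integral mode by mode. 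You instead \emph{verify} the Ansatz by applying the positive power $A^\beta$ (via the Balakrishnan formula, i.e.\ spectral calculus on the joint eigenbasis $\phi_{l,m}=e^{-t\mu_l}\mathcal{Y}_{l,m}$) and checking that the factors $\Lambda_l^{\beta}$ and $\Lambda_l^{-\beta}$ cancel; this is exactly the strategy the paper uses in the first half of the proof of Corollary \ref{coro:one} (the case $\varphi=0$), transplanted to the time-dependent setting. What your route buys: you avoid the formal translation rule $e^{a\partial_z}f(z)=f(z+a)$, replacing it with the elementary identity $\partial_t e^{-t\mu_l}=-\mu_l e^{-t\mu_l}$, and you supply the $L^2(dP\times d\lambda)$ convergence check that the paper relegates to a remark. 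What the paper's route buys: it derives the solution rather than verifying a guessed one, and it displays the probabilistic subordination structure behind $e^{-sA}$. Two caveats, which apply to the paper's proof as well: the Balakrishnan representation you quote holds only for $\beta\in(0,1)$, so the endpoint $\beta=1$ permitted by the statement needs the (trivial) separate observation that $A^1=A$; and when $\gamma=0$ the mode $l=0$ has $\Lambda_0=0$, so $\Lambda_0^{-\beta}$ is undefined and that degenerate case is not covered by either argument.
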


\begin{proof}
\label{thm-pro}
We have that
\begin{align*}
X_t(x) = & \left( \gamma - \mathbb{D}_M - \varphi \frac{\partial}{%
\partial t} \right)^{-\beta}\, T_t(x) \\
= & \int_0^\infty ds\, \frac{s^{\beta-1}}{\Gamma(\beta)} e^{s \varphi \frac{%
\partial}{\partial t} - s\gamma + s \mathbb{D}_M } T_t(x) \\
= & \int_0^\infty ds\, \frac{s^{\beta-1}}{\Gamma(\beta)} e^{s \varphi \frac{%
\partial}{\partial t} - s\gamma} \mathbb{P}_s T_t(x) \\
= & \int_0^\infty ds\, \frac{s^{\beta-1}}{\Gamma(\beta)} e^{- s\gamma}
\mathbb{P}_s T_{t+\varphi s}(x)
\end{align*}
where we used the translation rule
\begin{equation*}
e^{a\frac{\partial}{\partial z}}f(z)=f(z+a), \quad a \in \mathbb{R}
\end{equation*}
\noindent which holds for bounded continuous functions $f$ on $(0,
+\infty)$ (see, for example, formula (3.9) in \cite{DovWright} and the
references therein for details). From the fact that
\begin{equation}
\mathbb{P}_s \mathcal{Y}_{l,m}(x) = e^{-s\Psi(\mu_l)} \mathcal{Y}_{l,m}(x)
\end{equation}
\noindent where $\mathbb{P}_s=\exp(s\mathbb{D}_M)$ we get
that
\begin{align*}
X_t(x) = & \sum_{lm} a_{l,m} \left( \int_0^\infty ds\, \frac{s^{\beta-1}}{%
\Gamma(\beta)} e^{- s\gamma} e^{-(t+\varphi s) \mu_l} \mathbb{P}_s \mathcal{Y}_{l,m}(x)
\right) \\
= & \sum_{lm} a_{l,m} \left( \int_0^\infty ds\, \frac{s^{\beta-1}}{%
\Gamma(\beta)} e^{- s\gamma} e^{-(t+\varphi s)\mu_l} e^{-s\Psi(\mu_l)}
\right) \mathcal{Y}_{l,m}(x) \\
= & \sum_{lm} a_{l,m} e^{-t \mu_l}\left( \int_0^\infty ds\, \frac{s^{\beta-1}%
}{\Gamma(\beta)} e^{- s\gamma - s \varphi \mu_l -s\Psi(\mu_l)} \right)
\mathcal{Y}_{l,m}(x) \\
= & \sum_{lm} a_{l,m} e^{-t \mu_l} \left( \gamma + \varphi \mu_l +
\Psi(\mu_l) \right)^{-\beta} \mathcal{Y}_{l,m}(x)
\end{align*}
and this concludes the proof.
\end{proof}


We now examine the special case $\varphi=0$.

\begin{corollary}
Let $\alpha \in (0,1)$, $\beta \in (0,1]$. The solution to 
\begin{equation}
\left( \gamma - \mathbb{D}_M \right)^{\beta }X(x) = T(x)  \label{gamma-D-X}
\end{equation}
is written as 
\begin{equation}
X(x) = \sum_{lm} a_{l,m} \left(\gamma + \Psi(\mu_l)\right)^{-\beta} \mathcal{%
Y}_{l,m}(x).  \label{confirm-sol}
\end{equation}
\label{coro:one}
\end{corollary}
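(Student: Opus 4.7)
The plan is to read off the corollary as the special case of Theorem \ref{thm:two} obtained by taking $\varphi = 0$ and stripping away the time variable, and then to verify it directly with a short independent computation that mirrors the proof of Theorem \ref{thm:two}.

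First I would note that when $\varphi = 0$ the operator $\gamma - \mathbb{D}_M - \varphi \partial_t$ becomes $\gamma - \mathbb{D}_M$, so equation (\ref{rand-pde-varphi}) degenerates into an equation that does not involve $t$. Setting $t=0$ in the formula from Theorem \ref{thm:two} gives
\begin{equation*}
X(x) = \sum_{lm} a_{l,m}\,(\gamma + \Psi(\mu_l))^{-\beta}\,\mathcal{Y}_{l,m}(x),
\end{equation*}
which is exactly (\ref{confirm-sol}). In particular the innovation reduces to $T_0(x) = \sum_{lm} a_{l,m}\mathcal{Y}_{l,m}(x) = T(x)$, so the right-hand side of (\ref{gamma-D-X}) is consistent.

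For a self-contained derivation, I would reuse the integral representation of the negative fractional power,
\begin{equation*}
\left( \gamma - \mathbb{D}_M \right)^{-\beta} = \frac{1}{\Gamma(\beta)}\int_0^\infty s^{\beta-1} e^{-s\gamma}\,\mathbb{P}_s\, ds,
\end{equation*}
apply it to $T$, and use $\mathbb{P}_s \mathcal{Y}_{l,m}(x) = e^{-s\Psi(\mu_l)}\mathcal{Y}_{l,m}(x)$ together with the spectral expansion of $T$. Integrating the scalar factor $s^{\beta-1} e^{-s(\gamma + \Psi(\mu_l))}/\Gamma(\beta)$ produces $(\gamma + \Psi(\mu_l))^{-\beta}$ term by term, giving (\ref{confirm-sol}).

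The only step requiring care is the interchange of sum and integral, but this is controlled by the absolute and uniform convergence of the harmonic expansion established after Definition 1 and by the $L^2(dP\times d\lambda)$ convergence discussed in Remark \ref{remark-form-convergence}; since $\gamma + \Psi(\mu_l) > 0$ for all $l$, the coefficients $(\gamma + \Psi(\mu_l))^{-\beta}$ are bounded, so no additional difficulty arises. Hence no genuine obstacle remains, and the corollary follows immediately from Theorem \ref{thm:two}.
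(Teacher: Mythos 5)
Your proposal is correct, and your ``self-contained derivation'' is essentially verbatim the second half of the paper's own proof: the authors also write $X = (\gamma-\mathbb{D}_M)^{-\beta}T = \frac{1}{\Gamma(\beta)}\int_0^\infty s^{\beta-1}e^{-s\gamma}\mathbb{P}_s T\,ds$, use $\mathbb{P}_s\mathcal{Y}_{l,m} = e^{-s\Psi(\mu_l)}\mathcal{Y}_{l,m}$, and integrate term by term. The differences are worth noting. First, the paper opens with a step you omit: it posits a generic spectral expansion $X(x)=\sum_{lm}\hat a_{l,m}\mathcal{Y}_{l,m}(x)$ and applies the \emph{positive} fractional power via the Balakrishnan/Bessel-potential formula $(\gamma-\mathbb{D}_M)^{\beta}f = \frac{\beta}{\Gamma(1-\beta)}\int_0^\infty s^{-\beta-1}\left(f - e^{-s\gamma}\mathbb{P}_s f\right)ds$, obtaining $\hat a_{l,m}(\gamma+\Psi(\mu_l))^{\beta}$ and matching coefficients against $T$. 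This verifies directly that the candidate actually satisfies \eqref{gamma-D-X}, rather than only exhibiting it as the image of $T$ under a formally defined inverse; your argument relies entirely on the formal inversion (plus the appeal to Theorem \ref{thm:two}, which itself is proved only by formal inversion), so it is marginally less complete as a verification, though to the same standard of rigor the paper ultimately accepts. Second, your reduction of the corollary to the $\varphi=0$, $t=0$ case of Theorem \ref{thm:two} is a legitimate shortcut the paper does not state, and it has the small advantage of covering the endpoint $\beta=1$ cleanly, whereas the paper's Bessel-potential step is explicitly restricted to $\beta\in(0,1)$ (the factor $\Gamma(1-\beta)$ degenerates at $\beta=1$). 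Neither difference amounts to a gap.
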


\begin{proof}
For $\beta \in (0,1)$ we consider the following relation concerning the
fractional power of operators (Bessel potential). For $f \in L^2(\mathbf{S}%
^2_1)$ we have that
\begin{align*}
\left( \gamma - \mathbb{D}_M \right)^{\beta }f(x) = & \frac{\beta}{%
\Gamma(1-\beta)} \int_0^\infty \frac{ds}{s^{\beta+1}} \left( 1- e^{-s\gamma
+ s \mathbb{D}_M}\right) f(x) \\
= & \frac{\beta}{\Gamma(1-\beta)} \int_0^\infty \frac{ds}{s^{\beta+1}}
\left( f(x) - e^{-s\gamma} \mathbb{P}_sf(x) \right)
\end{align*}
where, we recall that $\mathbb{P}_s f$ is the transition semigroup
associated with the operator $\mathbb{D}_M$ and $u(x, t) = \mathbb{P}%
_tf(x)$ solves the Cauchy problem $(\partial_t - \mathbb{D}_M)u(x,t)=0 $ with $u(x,0)=f(x)$. Therefore, if we assume that there
exists the following spectral representation for the solution $X$ as a random
function on $\mathbf{S}^2_1$,
\begin{equation}
X(x) = \sum_{lm} \hat{a}_{l,m} \mathcal{Y}_{l,m}(x)
\end{equation}
then we can immediately write
\begin{align*}
\left( \gamma - \mathbb{D}_M \right)^{\beta }X(x) = & \frac{\beta}{%
\Gamma(1-\beta)} \sum_{lm} \hat{a}_{l,m} \int_0^\infty \frac{ds}{s^{\beta+1}}
\left( \mathcal{Y}_{l,m}(x) - e^{-s \gamma} \mathbb{P}_s \mathcal{Y}_{l,m}(x)\right) \\
= & \frac{\beta}{\Gamma(1-\beta)} \sum_{lm} \hat{a}_{l,m} \int_0^\infty \frac{%
ds}{s^{\beta+1}} \left( 1- e^{-s \gamma} e^{-s \Psi(\mu_l)}\right) \mathcal{Y}_{l,m}(x)
\\
= & \sum_{lm} \hat{a}_{l,m} \left(\gamma + \Psi(\mu_l)\right)^{\beta
} \mathcal{Y}_{l,m}(x).
\end{align*}
The equation \eqref{gamma-D-X} turns out to be satisfied only if
\begin{equation*}
\hat{a}_{l,m} = a_{l,m} \, (\gamma + \Psi(\mu_l))^{-\beta}.
\end{equation*}
On the other hand, by repeating the arguments of the proof of Theorem \ref{thm-pro} we have that
\begin{align*}
X(x) = & \left( \gamma - \mathbb{D}_M \right)^{-\beta} T(x) \\
= & \int_0^\infty ds \frac{s^{\beta -1}}{\Gamma(\beta)} e^{-s\gamma + s
\mathbb{D}_M}T(x) \\
= & \int_0^\infty ds \frac{s^{\beta -1}}{\Gamma(\beta)} e^{-s\gamma} \mathbb{%
P}_s T(x) \\
= & \sum_{lm} a_{l,m} \int_0^\infty ds \frac{s^{\beta -1}}{\Gamma(\beta)}
e^{-s\gamma} e^{-s \Psi(\mu_l)} \mathcal{Y}_{l,m}(x) \\
= & \sum_{lm} a_{l,m} \left( \gamma + \Psi(\mu_l) \right)^{-\beta} \mathcal{Y}_{l,m}(x).
\end{align*}
This confirms result \eqref{confirm-sol}.
\end{proof}

We now study the covariance of the random fields introduced so far. Let us
consider the representation 
\begin{equation}
X_t(x) = \sum_{lm} a_{l,m} \mathcal{T}_l(t) \mathcal{Y}_{l,m}(x) = \sum_l 
\mathcal{T}_l(t) T_l(x)  \label{rep-X-gen}
\end{equation}
already introduced in Remark \ref{remark-form-convergence}. We also recall
that, for $x,y \in \mathbb{S}^2_1$, 
\begin{equation}
\mathbb{E}[X_0(x)\, X_0(y)] = \sum_l \frac{2l+1}{4\pi} \,C_l \, Q_l(\langle
x,y \rangle) = \mathbb{E}[T(x)\,T(y)].
\end{equation}
Furthermore, 
\begin{equation}
\mathbb{E}[T(x)\,T(y)] = \sum_l \mathbb{E}[T_l(x)\,T_l(y)].
\end{equation}
This is due to the fact that the coefficients $a_{l,m}$ are uncorrelated
over $l$.

\begin{remark}
We observe that
\begin{itemize}
\item for $X_{t}(x)$ as in Theorem \ref{thm:one}, 
\begin{equation}
\mathcal{T}_{l}(t)=E_{\beta }\left( -t^{\beta }(\gamma +\Psi (\mu
_{l}))\right) \geq \frac{1}{1+\Gamma (1-\beta )t^{\beta }(\gamma +\Psi (\mu
_{l}))},\quad t\geq 0,\;l\geq 0
\end{equation}%
For this inequality, consult \cite[Theorem 4]{simon14}.

\item for $X_t(x)$ as in Theorem \ref{thm:two}, 
\begin{equation}
\mathcal{T}_l(t) = e^{-t \mu_l} \left( \gamma + \varphi \mu_l + \Psi(\mu_l)
\right)^{-\beta} \leq e^{-t l^2} \left( \gamma + \varphi l^2 + \Psi(l^2)
\right)^{-\beta}, \quad t \geq 0,\; l \geq 0
\end{equation}

\item for $X(x)$ as in Corollary \ref{coro:one}, 
\begin{equation}
\mathcal{T}_l(0)= \left( \gamma + \Psi(\mu_l) \right)^{-\beta} \leq \left(
\gamma + \Psi(l^2) \right)^{-\beta} , \quad t \geq 0,\; l \geq 0
\end{equation}
\end{itemize}

\label{remarl-Trond}
\end{remark}

\begin{remark}
Let $B(\tau_t)$ be a time-changed Brownian motion on the unit sphere. We
refer to it as a coordinates change for the random field on the sphere $T$.
From the previous results, we observe that 
\begin{equation}
X_t(x) = \mathbb{E}[ T_0(x+ B(\tau_t)) | \mathfrak{F}_T] = \mathbb{E}%
[T_{\tau_t}(x)| \mathfrak{F}_T], \quad x \in \mathbb{S}^2_1, \; t>0
\end{equation}
where 
\begin{equation}
T_t(x) = \sum_l e^{-t \mu_l} T_l(x), \quad x \in \mathbb{S}^2_1, \; t>0.
\end{equation}
Moreover, 
\begin{equation}
\mathcal{T}_l(t) = \mathbb{E} e^{-\mu_l \tau_t}.  \label{lap-Trond-rep}
\end{equation}
\end{remark}

We can state the following result for which the spherical Brownian motions
underlying $X_t(x)$ and $X_t(y)$ are assumed independent.

\begin{thm}
For $x,y \in \mathbb{S}^2_1$, for all $g \in SO(3)$, we have that 
\begin{equation}
\mathbb{E}[X_{t}(gx)\, X_s(gy)] = \sum_l \frac{2l+1}{4\pi} \,C_l \, \mathcal{%
T}_l(t)\, \mathcal{T}_l(s)\, P_l(\langle x,y \rangle), \quad t,s\geq 0
\end{equation}
\end{thm}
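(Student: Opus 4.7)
The plan is to leverage the common spectral form $X_t(x) = \sum_{lm} a_{l,m}\, \mathcal{T}_l(t)\, \mathcal{Y}_{l,m}(x)$ that every random field constructed in Theorem~\ref{thm:one}, Theorem~\ref{thm:two}, and Corollary~\ref{coro:one} admits (with $\mathcal{T}_l$ read off from Remark~\ref{remarl-Trond}). Since $\mathcal{T}_l(t) = \mathbb{E}\, e^{-\mu_l \tau_t}$ by (\ref{lap-Trond-rep}) is a deterministic function of time, all randomness in the product $X_t(gx)\, X_s(gy)$ is carried by the Gaussian coefficients $a_{l,m}$; this is consistent with the conditional-expectation representation (\ref{X-rep-F-algebra}), and in particular the standing independence of the two driving spherical Brownian motions does not generate any cross-term contribution after expectations are taken.

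The core calculation is then to expand
$$X_t(gx)\, X_s(gy) = \sum_{lm}\sum_{l'm'} a_{l,m}\, a_{l',m'}\, \mathcal{T}_l(t)\, \mathcal{T}_{l'}(s)\, \mathcal{Y}_{l,m}(gx)\, \mathcal{Y}_{l',m'}(gy).$$
Because $T$, and hence $X_s(gy)$, is real, I would first replace $X_s(gy)$ by $X_s(gy)^{\ast}$ inside the product and push the conjugation onto the summands (using that $\mathcal{T}_{l'}(s)\in\mathbb{R}$), turning $a_{l',m'}\, \mathcal{Y}_{l',m'}(gy)$ into $a^{\ast}_{l',m'}\, \mathcal{Y}^{\ast}_{l',m'}(gy)$. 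Taking the expectation term by term and invoking the Gaussian orthogonality (\ref{angular-power-C}) in the form $\mathbb{E}[a_{l,m}\, a^{\ast}_{l',m'}]=\delta_l^{l'}\delta_m^{m'}\, C_l$ collapses the double sum to
$$\mathbb{E}[X_t(gx)\, X_s(gy)] = \sum_l C_l\, \mathcal{T}_l(t)\, \mathcal{T}_l(s) \sum_{m=-l}^{+l} \mathcal{Y}_{l,m}(gx)\, \mathcal{Y}^{\ast}_{l,m}(gy).$$

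To finish, I would apply the addition formula (\ref{addition-f}) to identify the inner $m$-sum with $\tfrac{2l+1}{4\pi}\, Q_l(\langle gx, gy \rangle)$, and then invoke the rotation invariance $\langle gx, gy \rangle = \langle x, y \rangle$, valid for every $g \in SO(3)$, to rewrite the argument as $\langle x, y \rangle$. The only delicate point is the term-by-term interchange of expectation and the two infinite sums; this is justified by the $L^2(dP\times d\lambda)$ convergence recorded in Remark~\ref{remark-form-convergence} together with the decay estimates on $\mathcal{T}_l$ collected in Remark~\ref{remarl-Trond}, so beyond the standard book-keeping there is no real obstacle.
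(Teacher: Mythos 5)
Your proposal is correct and follows essentially the same route as the paper: expand both fields in the spectral representation \eqref{rep-X-gen}, use the second-moment structure of the Gaussian coefficients to collapse the double sum over $(l,m),(l',m')$ to a single sum in $l$, and conclude with the addition formula \eqref{addition-f}. The only cosmetic difference is that you introduce the conjugate via the reality of $X_s(gy)$ and then apply \eqref{angular-power-C} directly, whereas the paper works with $\mathbb{E}[a_{l,m}a_{l',m'}]=(-1)^m\delta_l^{l'}\delta_{-m}^{m'}C_l$ and the relation $\mathcal{Y}_{l,m}=(-1)^m\mathcal{Y}^*_{l,-m}$; these are equivalent manipulations, and your explicit mention of the rotation invariance $\langle gx,gy\rangle=\langle x,y\rangle$ and of the interchange of expectation and summation only makes the argument more complete than the paper's.
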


\begin{proof}
First we observe that
\begin{equation}
\mathbb{E}[a_{l,m} a_{l^\prime, m^\prime}] = (-1)^m \delta_l^{l^\prime} \delta_{-m}^{m^\prime}C_l
\end{equation}
from the property $\mathcal{Y}_{l,m}(x) = (-1)^m \mathcal{Y}^*_{l-m}(x)$ of the spherical harmonics. From the representation \eqref{rep-X-gen} we can write
\begin{align*}
\mathbb{E}[X_t(x)\,X_s(y)] = & \sum_{lm} \sum_{l^\prime m^\prime} \mathbb{E}[a_{l,m} a_{l^\prime m^\prime}] \mathcal{T}_l(t) \mathcal{T}_{l^\prime}(s) \mathcal{Y}_{l,m}(x) \mathcal{Y}_{l^\prime m^\prime}(y) \\
= &  \sum_{lm} \, C_l\, \mathcal{T}_l(t) \mathcal{T}_{l}(s)\, \mathcal{Y}_{l,m}(x) \mathcal{Y}_{l,m}^*(y)\\
= &  \sum_{l} \frac{2l+1}{4\pi} C_l\, \mathcal{T}_l(t) \mathcal{T}_{l}(s)\, P_l(\langle x, y \rangle)
\end{align*}
where $\mathcal{T}_l(t)$ is given as in \eqref{lap-Trond-rep} and we used the addition formula in order to arrive at $P_l(\langle x, y \rangle)$. 
\end{proof}

\begin{remark}
We can immediately see that the variance becomes 
\begin{equation}
\mathbb{E}[X_{t}(gx)]^{2}=\sum_{l}\frac{2l+1}{4\pi }C_{l}\,|\mathcal{T}%
_{l}(t)|^{2},\quad \forall \,g\in SO(3).  \label{var-last}
\end{equation}%
We recall that $C_{l}$ is the angular power spectrum of $T$ and, it is
usually assumed to be $C_{l}\sim l^{-\gamma }$ with $\gamma \geq 2$ for
large $l$ to ensure summability (or $C_{l}\sim L(l)/t^{\theta }$ where $%
L(\cdot )$ is slowly varying function as $l\rightarrow \infty $). As Remark %
\ref{remarl-Trond} shows we have the high-frequency behaviour also for $%
\mathcal{T}_{l}(t)$ in both the variable $t>0$ and the frequency $l>0$. The
convergence of \eqref{var-last} therefore entails different correlation
structures for the solutions $X_{t}(x)$ of the equations investigated so far.
\end{remark}

We say that the zero mean process $X_t(x)$ exhibits a long range dependence
if 
\begin{equation}
\sum_{h=1}^\infty \mathbb{E}[X_{t+h}(x) X_t(y)] =\infty, \quad x,y, \in 
\mathbb{S}_1^2.  \label{long-condition}
\end{equation}
Conversely, we say that $X$ exhibits a short range dependence if the series %
\eqref{long-condition} converges.

\begin{remark}
We write 
\begin{equation*}
\mathcal{K}_t (x,y) = \sum_{h \geq 1} \mathbb{E}[X_{t+h}(x)\,X_t(y)], \quad
t\geq 0
\end{equation*}
for $x, y \in \mathbf{S}^2_1$. From the discussion above, we have that

\begin{itemize}
\item for $X_{t}(x)$ as in Theorem \ref{thm:one}, 
\begin{align}
\mathcal{K}_{t}(x,y)=& \sum_{h\geq 1}\sum_{l\geq 0}\frac{2l+1}{2\pi }%
C_{l}E_{\beta }(-t^{\beta }(\gamma +\Psi (\mu _{l})))E_{\beta
}(-(t+h)^{\beta }(\gamma +\Psi (\mu _{l}))) \\
\geq & \sum_{h\geq 1}\sum_{l\geq 0}\frac{2l+1}{2\pi }C_{l}\frac{1}{1+\Gamma
(1-\beta )t^{\beta }(\gamma +\Psi (\mu _{l}))}\frac{1}{1+\Gamma (1-\beta
)(t+h)^{\beta }(\gamma +\Psi (\mu _{l}))} \notag \\
\geq & \sum_{l\geq 0}\frac{2l+1}{2\pi }C_{l}\frac{1}{1+\Gamma (1-\beta
)t^{\beta }(\gamma +\Psi (\mu _{l}))}\sum_{h\geq 1}\frac{1}{1+\Gamma
(1-\beta )(t+h)^{\beta }(\gamma +\Psi (\mu _{l}))} \notag \\
=& \infty, \notag
\end{align}%
that is the random field exhibits a long-range dependence;
\item for $X_{t}(x)$ as in Theorem \ref{thm:two}, 
\begin{align}
\mathcal{K}_{t}(x,y)\leq & \sum_{h\geq 1}\sum_{l\geq 0}\frac{2l+1}{4\pi }%
C_{l}\,e^{-2tl^{2}-hl^{2}}\left( \gamma +\varphi l^{2}+\Psi (l^{2})\right)
^{-2\beta }\,P_{l}(\langle x,y\rangle )  \\
\leq & \sum_{l\geq 0}\frac{2l+1}{4\pi }\frac{C_{l}}{e^{l^{2}}-1}%
e^{-2tl^{2}}\left( \gamma +\varphi l^{2}+\Psi (l^{2})\right) ^{-2\beta
}\,P_{l}(\langle x,y\rangle )  \notag \\
<& \infty, \notag
\end{align}%
that is the random field has a short range dependence.
\end{itemize}

\label{remarl-Trond-memory-t}
\end{remark}

\end{document}